\documentclass[english
]{article}
\usepackage[T1]{fontenc}
\usepackage[latin9]{inputenc}
\usepackage{amssymb, amsmath, amsthm}
\usepackage{babel}

\usepackage{amsmath, amsthm, amscd, graphicx,amssymb,comment,
cite
}

\def\D{\Delta}

\def\lam{\lambda}

\newcommand{\Id}{\operatorname{Id}\nolimits}
\newcommand{\Ad}{\operatorname{Ad}\nolimits}
\newcommand{\spann}{\operatorname{span}\nolimits}
\newcommand{\SOLV}{\operatorname{SOLV}\nolimits}

\renewcommand{\Vec}{\operatorname{Vec}\nolimits}

\def\vH{\overrightarrow{H}}

\newcommand{\be}[1]{\begin{equation}\label{#1}}
\newcommand{\ee}{\end{equation}}
\newcommand{\eq}[1]{$(\protect\ref{#1})$}
\newcommand{\map}[3]{#1 \, : \, #2 \to #3}


\def\R{{\mathbb R}}

\def\kinv{{\kappa}}
\def\xinv{{\chi}}

\newcommand{\heis}{\operatorname{h}_3\nolimits}
\newcommand{\SO}{\operatorname{SO(3)}\nolimits}
\newcommand{\SE}{\operatorname{SE(2)}\nolimits}
\newcommand{\SU}{\operatorname{SU(2)}\nolimits}
\newcommand{\SH}{\operatorname{SH(2)}\nolimits}
\newcommand{\SL}{\operatorname{SL(2)}\nolimits}

\newcommand{\so}{\operatorname{so(3)}\nolimits}
\newcommand{\sh}{\operatorname{sh(2)}\nolimits}
\newcommand{\se}{\operatorname{se(2)}\nolimits}
\renewcommand{\sl}{\operatorname{sl(2)}\nolimits}

\renewcommand{\phi}{\varphi}

\def\ds{\displaystyle}

\newtheorem{theorem}{Theorem}
\newtheorem{lemma}{Lemma}



\newcommand{\rank}{\operatorname{rank}}

\begin{document}

\title{Superintegrability of  Sub-Riemannian Problems on Unimodular 3D Lie Groups\footnote{Work supported by 
Grant of the Russian Federation for the State Support of Researches
(Agreement  No~14.B25.31.0029).}}

\author{Alexey P. Mashtakov,  Yuri L. Sachkov}

\maketitle

\begin{abstract}
Left-invariant sub-Riemannian problems on unimodular 3D Lie groups are considered. For the Hamiltonian system of Pontryagin maximum principle for sub-Riemannian geodesics, the Liouville integrability and superintegrability are proved. 
\end{abstract}

\bigskip 
{Keywords: sub-Riemannian geometry, Hamiltonian system, integrals, Casimir functions, integrability, superintegrability.}

\medskip
AMS subject classifications: 49J15, 53C17, 37J35


\section{Introduction}
Let $G$ be a connected 3-dimensional Lie group, and $L$ the Lie algebra of left-invariant vector fields on $G$. A left-invariant contact sub-Riemannian structure on $G$ is a rank 2, left-invariant subbundle $\D \subset TG$, $\D + [\D,\D]=TG$, endowed with a left-invariant inner product $g$ in $\D$. 
Sub-Riemannian (SR) minimizers  are   Lipschitzian curves $q:[0,t_1] \to G$ such that $\dot{q}(t) \in \D_{q(t)}$ for almost all $t \in [0,t_1]$, and the length of the curve 
$\ds l(q(\cdot)) = \int_{0}^{t_1} \sqrt{g(\dot{q}(t),\dot{q}(t))} \, dt$
is the minimum possible for all curves that connect two given points: $q(0) = q_0$ and  $q(t_1)=q_1$. 

A left-invariant sub-Riemannian structure can be defined by a left-invariant orthonormal frame $f_1, f_2 \in L$:
\be{frame}
\D_q = \spann (f_1(q),f_2(q)), \qquad g(f_i(q),f_j(q)) = \delta_{ij}, \quad i,j = 1,2. 
\ee
Then SR  minimizers are solutions to the optimal control problem
\begin{eqnarray}
&&\dot q = u_1 f_1(q) + u_2 f_2(q), \qquad q \in G, \qquad (u_1, u_2) \in \R^2, \label{pr1}\\
&&q(0) = q_0, \qquad q(t_1) = q_1, \label{pr2}\\
&&l = \int_{0}^{t_1} \sqrt{u_1^2 + u_2^2} \, dt \to \min. \label{pr3} 
\end{eqnarray}
SR geodesics  are curves in $G$ whose sufficiently short arcs are SR  minimizers.

Sub-Riemannian geometry is a rapidly developing domain of mathematics at the crossroads of differential geometry, PDEs, optimal control and calculus of variations, metric analysis, Lie groups and Lie algebras theory, and other important domains, with rich applications to classical and quantum mechanics, robotics, neurophysiology and vision, etc~\cite{mont, notes, ABB, stricharts, gromov,versh_gersh, brock, ledonne}.

In this work we are interested in the problem of describing SR minimizers. The most efficient approach to this problem is based on optimal control theory \cite{PBGM,  jurd_book, notes}, and it consists of the following steps: 
\begin{enumerate}
\item \label{enum:st1} proof of existence of SR   minimizers, which is a standard corollary of the Rashevsky-Chow and Filippov theorems,
\item \label{enum:st2}
parametrization of SR geodesics via Pontryagin maximum principle,
\item \label{enum:st3}
selection of SR  minimizers among SR geodesics via second order optimality conditions and detailed study of structure of the family of SR geodesics.
\end{enumerate} 
Along this sequence, complexity of the problems grows exponentially. Existence of SR length minimizers is standard for left-invariant problems on Lie groups. 
Explicit parameterization of SR geodesics was performed in many problems. Complete description of all SR length minimizers (optimal synthesis in optimal control problem~\eq{pr1}--\eq{pr3}) is known just in several simplest cases: the Heisenberg group \cite{versh_gersh, brock}, $\SO$, $\SU$, $\SL$ with the Killing metric \cite{boscain_rossi}, $\SE$ \cite{cut_sre2},  
on the Engel group~\cite{engel, engel_conj, engel_cut}, for 2-step corank 2 nilpotent SR problems~\cite{BBG_step2}. Finding a parameterization of SR geodesics can be a nontrivial problem even for left-invariant SR structures on Lie groups. 
So a natural question arises on a theoretical possibility of such parameterization in some reasonable sense --- the question of integrability of ODEs that determine the SR geodesics. 
In paper \cite{shapiro} was constructed an example of a 6-dimensional Lie group with a nonintegrable ODE for SR geodesics.

In this paper we prove that the Hamiltonian system of ODEs for SR geodesics is Liouville integrable (and superintegrable) for any contact left-invariant SR structure on a 3-dimensional unimodular Lie group. That is, we consider Lie groups $G$ with the Lie algebras $L=\heis$, $\so$, $\sl$, $\se$, and $\sh$. 

Recall that a Hamiltonian vector field $\vH$ on a symplectic manifold $M$, $\dim M = 2 d$, is called  Liouville integrable if it has $d$ independent integrals in involution, i.e., there exist functions $f_1 = H, f_2, \dots, f_d \in C^{\infty}(M)$ such that $\{f_i, f_j\} = 0$, $i, j = 1, \dots, d$, and $f_1$, \dots, $f_d$ are functionally independent on an open dense subset of $M$~\cite{arnold_mech}. A Hamiltonian vector field $\vH$ is called superintegrable (or integrable in noncommutative sense)~\cite{nehoroshev, mish_fom} if there exist functions $f = (f_1 = H, f_2, \dots, f_{2d-n})$, $f_i \in C^{\infty}(M)$, functionally independent on an open dense subset of $M$, such that:
\begin{eqnarray}
&&\{f_i, f_j\} = P_{ij} \circ f, \qquad \map{P_{ij}}{f(M)}{\R}, \quad i, j = 1, \dots, 2d-n,
\label{fifjPij}\\
&&
\text{the matrix  $(P_{ij})$ has rank  $2d - 2n$ on an open dense subset of $f(M)$}.
\label{rankPij}
\end{eqnarray}
In the case $n = d$ superintegrability reduces to Liouville integrability. A review of geometry of superintegrable system may be found e.g. in~\cite{fasso}.

\section{Sub-Riemannian structures \\and Pontryagin maximum principle}
Left-invariant contact SR structures on 3D Lie groups $G$ were classified up to local isometries in a recent work by A.~Agrachev and D.~Barilari \cite{agrachev_barilari}. In particular, it was shown that if $G$ is unimodular, i.e.,  its Lie algebra $L$ is one of the Lie algebras $\heis$, $\so$, $\sl$, $\se$, or $\sh$, then there exists an orthonormal frame 
$L = \spann(f_0,f_1,f_2)$ such that $f_1$, $f_2$ satisfy~\eq{frame} and
\be{fifj}
\left[f_2,f_1\right] = f_0, \qquad 
\left[f_1, f_0\right] = (\xinv + \kinv)f_2, \qquad
\left[f_2,f_0\right] = (\xinv - \kinv) f_1,
\ee
for some constants $\xinv \geq 0$ and $\kinv \in \R$.

It is well known~\cite{notes} that arclength parameterized SR geodesics for contact left-invariant problems on Lie groups are projections 
$q(t) = \pi(\lambda(t))$, $\pi: T^{\ast}G \to G$,
of trajectories of the Hamiltonian system
$\dot{\lambda} = \overrightarrow{H}(\lambda)$, $\lambda \in T^{\ast}G$,
with the Hamiltonian function
$H(\lambda) = \frac12(h_1^2(\lambda)+h_2^2(\lambda))$,
$h_i(\lambda) = \langle\lambda, f_i(q)\rangle$, $q = \pi(\lambda)$.
Here the Hamiltonian vector field $\overrightarrow{H}$ on the cotangent bundle $T^{\ast}G$ is defined by the equality
$\sigma_{\lambda}(\cdot,\overrightarrow{H}) = d_{\lambda}H$, $\lambda \in T^{\ast}G$,
where $\sigma = ds$, $s_{\lambda}= \lambda \circ \pi_{\ast}.$
The aim of this work is the proof of  integrability of the Hamiltonian vector field $\overrightarrow{H}$ in the Liouville and noncommutative sense.

By virtue of the Lie brackets (\ref{fifj}), we have the Poisson brackets
$\left\{H,h_1\right\}    = h_2 h_0$,
$\left\{H, h_2\right\}    = -h_1 h_0$,
$\left\{H,h_0\right\}  =   2 \xinv h_1 h_2$.
Thus the Hamiltonian system  
$\dot{\lambda} = \overrightarrow{H}(\lambda)$ reads as follows:
\begin{eqnarray}
&\dot{h}_1 = h_2 h_0, \qquad 
\dot{h}_2 = -h_1 h_0, \qquad 
\dot{h}_0 = 2 \xinv h_1 h_2, \label{vert}\\
&\dot{q} = h_1 f_1 + h_2 f_2. \nonumber
\end{eqnarray} 
In the polar coordinates 
$h_1 = r \cos \theta$, $h_2 = r \sin \theta$, 
the vertical subsystem~\eq{vert} reduces to 
$$
\dot{r} = 0,\qquad 
\dot{\theta} = -h_0, \qquad
\dot{h}_0 = \xinv r^2 \sin 2 \theta.
$$ 
Further, in the coordinates
$ \gamma = 2 \theta$, $c = -2 h_0$
we get the equation of pendulum
\be{pend}
\dot{r} = 0, \qquad 
\dot{\gamma} = c, \qquad 
\dot{c} = - 2 \xinv r^2 \sin \gamma.
\ee
This equation has the integral of full energy 
$$E = \frac{c^2}{2} - 2 \xinv r^2 \cos \gamma = 2 h_0^2 - 2 \xinv(h_1^2-h_2^2).$$
Thus the Hamiltonian vector field $\vH$ has two left-invariant integrals: the Hamiltonian $H$ and the energy $E$ of pendulum~\eq{pend}.

\section{Right-invariant Hamiltonians} \label{sec:Ham}
For any right-invariant vector field $e \in \Vec(G)$, one can define the corresponding Hamiltonian $g_e(\lambda) = \langle\lambda,e\rangle$,  $\lambda \in T^{\ast} G$.
Since right translations commute with the left ones,  left-invariant vector fields commute with   right-invariant ones. Thus left-invariant Hamiltonians Poisson-commute with the right-invariant ones. So right-invariant Hamiltonians provide a natural source of integrals for left-invariant Hamiltonian vector fields (in particular, for left-invariant optimal control problems).

A standard procedure to construct a right-invariant vector field $e \in \Vec(G)$ from a left-invariant one is to apply the inversion $i:G \to G$, $i(q)=q^{-1}$. If $f \in \Vec(G)$ is left-invariant, then $e(\Id) = -f(\Id)$. This construction preserves Lie brackets: if $[f_i,f_j]=\sum_k c_{ij}^k f_k$, then $[e_i,e_j]=\sum_k c_{ij}^k e_k$.

Consider the right-invariant frame $e_1$, $e_2$, $e_0 \in \Vec(G)$ constructed thus from the left-invariant frame $f_1$, $f_2$, $f_0 \in \Vec(G)$. We have a decomposition
$e_i= a_i^0 f_0 + a_i^1 f_1 + a_i^2 f_2$, $a_i^j \in C^{\infty}(G)$,
with $a_i^j(\Id) = -\delta_i^j, \, i,j = 0,1,2$.
Then the right-invariant Hamiltonians $g_i(\lambda) = \langle\lambda, e_i\rangle$ admit the decomposition 
$g_i= a_i^0 h_0 + a_i^1 h_1 + a_i^2 h_2$.

The Hamiltonian vector field $\overrightarrow{H}$ has   integrals
$H$, $E$, $g_0$, $g_1$, $g_2$,
with the only nonzero Poisson brackets following from the multiplication table~(\ref{fifj}):
\be{gigj}
\left\{g_2, g_1\right\}  = g_0, \qquad 
\left\{g_1, g_0\right\}  = (\xinv + \kinv)g_2, \qquad 
\left\{g_2, g_0\right\}  = (\xinv - \kinv) g_1. 
\ee 

\section{Liouville integrability}
In order to study the functional independence of the integrals $H$, $E$, $g_0$, $g_1$, $g_2$ at a point $\lambda \in T^{\ast} G$, $\pi(\lambda) = \Id$, introduce the Jacobian matrix
$$J(\lambda) = (\nabla H \, \frac14 \nabla E \, \nabla g_0 \, \nabla g_1 \, \nabla g_2)^T.$$
Let $(x_0,x_1,x_2)$ be local coordinates in a neighborhood of $\Id \in G$, then we obtain in the coordinates $\left(h_0, h_1, h_2; x_0, x_1, x_2\right)$ on $T^{\ast}G$:
\begin{equation}
\label{JId}
J = \left(\begin{array}[c]{c c c c c c}
0 & h_1 & h_2 & 0 & 0 & 0 \\	
h_0 & -\xinv h_1 & \xinv h_2 & 0 & 0 & 0 \\
-1 & 0 & 0 & g_{00} & g_{01} & g_{02} \\
0 & -1 & 0 & g_{10} & g_{11} & g_{12} \\
0 & 0 & -1 & g_{20} & g_{21} & g_{22} 
\end{array}
\right),
\end{equation}
where $g_{ij} = \ds\frac{\partial g_i}{\partial x_j}(\Id).$
Liouville integrability of the field $\overrightarrow{H}$ follows by the study of the vertical derivatives of the integrals (i.e., the derivatives w.r.t. the variables $h_i$).
\begin{theorem}
\label{th1}
\begin{itemize}
\item[$(1)$]
The Hamiltonian vector field $\overrightarrow{H}$ has integrals
\begin{equation}
\label{integrals}
H, \ E, \ g_0, \ g_1, \ g_2
\end{equation}
with the nonzero Poisson brackets~\eq{gigj}.
\item[$(2)$]
Integrals~\eq{integrals} are functionally dependent since
\be{depend}
4 \kinv H + E = 2(g_0^2 + (\kinv - \xinv)g_1^2 +(\kinv+\xinv)g_2^2).
\ee
The functions in the left-hand side and right-hand side of identity~\eq{depend} are Casimir functions on $L^*$.
\item[$(3)$] The field $\overrightarrow{H}$ is Liouville integrable. Specifically:
\begin{enumerate}
\item[$(3.1)$] If $\xinv \neq 0$, then for any $g= \alpha_0 g_0 + \alpha_1 g_1 + \alpha_2 g_2$, $(\alpha_0, \alpha_1, \alpha_2) \in \R^3 \backslash \{0\}$, the integrals  $H$, $E$, $g$ are in involution and are functionally independent on an open dense subset of $T^{\ast}G$;
\item[$(3.2)$] If $\xinv = 0$, then the same property holds for any $g= \alpha_0 g_0 + \alpha_1 g_1 + \alpha_2 g_2$, with  $\alpha_1^2 + \alpha_2^2 \neq 0$. 
\end{enumerate}
\end{itemize}
\end{theorem}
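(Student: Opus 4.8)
\medskip
\noindent\emph{Proof plan.}
Part~$(1)$ merely records the content of the two preceding sections: $H$ is trivially an integral, $E$ is an integral because after the substitution $\gamma=2\theta,\ c=-2h_0$ the vertical subsystem \eq{vert} becomes the pendulum \eq{pend} with total energy $E$, and each $g_i$ is an integral because it is a right-invariant Hamiltonian while $H$ is left-invariant, whence $\{H,g_i\}=0$. The same left--right commutation gives $\{E,g_i\}=0$, and $\{H,E\}=0$ since $E$ is an integral, so the only possibly nonzero brackets among the five functions are the $\{g_i,g_j\}$; these equal \eq{gigj} because the inversion $i(q)=q^{-1}$ carries $f_0,f_1,f_2$ to $e_0,e_1,e_2$ preserving the structure relations \eq{fifj}.

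For part~$(2)$ I would first rewrite the left-hand side of \eq{depend} using $H=\frac12(h_1^2+h_2^2)$ and $E=2h_0^2-2\xinv(h_1^2-h_2^2)$ as $4\kinv H+E=2\,Q(h_0,h_1,h_2)$, where $Q(y_0,y_1,y_2)=y_0^2+(\kinv-\xinv)y_1^2+(\kinv+\xinv)y_2^2$. A short computation with the Lie--Poisson brackets of the $h_i$ read off from \eq{fifj} shows $\{Q,h_0\}=\{Q,h_1\}=\{Q,h_2\}=0$, so $Q$ is a Casimir of $L^*$, hence $\Ad^*$-invariant since $G$ is connected. Now $h(\lambda)$ and $g(\lambda)$ are the left- and right-trivializations of $\lambda\in T^*G$, and the relations $e_i(q)=(R_q)_*e_i(\Id)=-(R_q)_*f_i(\Id)$ and $f_k(q)=(L_q)_*f_k(\Id)$ give $g(\lambda)=-\Ad^*_{q}h(\lambda)$ (up to the convention for $\Ad^*$). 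Since $Q$ is $\Ad^*$-invariant and $Q(-y)=Q(y)$, it follows that $Q(g_0,g_1,g_2)=Q(h_0,h_1,h_2)$, which is exactly \eq{depend}; both sides are the single Casimir function $2Q$ written in the two trivializations.

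In part~$(3)$ involution is immediate: $\{H,E\}=0$, $\{H,g\}=\sum_i\alpha_i\{H,g_i\}=0$ as in $(1)$, and $\{E,g\}=0$ because $E$ depends only on the left-invariant $h_i$ and $g$ only on the right-invariant $g_i$. Everything then reduces to functional independence of $dH,dE,dg$ on an open dense set. Since $H,E$ are left-invariant and $\spann(g_0,g_1,g_2)$ is preserved by the cotangent lift of left translations (through the coadjoint action, which preserves the centre $Z(L)$ and hence the set of admissible $\alpha$), and left translations act transitively on $G$, it suffices to prove independence at all points of $T^*_\Id G$ for every admissible $g$; the bad set, being closed with nowhere dense fibres, is then nowhere dense in $T^*G$. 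Over $\Id$ I would use $J$ from \eq{JId}: for $g=\sum_i\alpha_i g_i$ the three rows are $\nabla H=(0,h_1,h_2,0,0,0)$, $\frac14\nabla E=(h_0,-\xinv h_1,\xinv h_2,0,0,0)$ and $\sum_i\alpha_i\nabla g_i=(-\alpha_0,-\alpha_1,-\alpha_2,\tilde g_0,\tilde g_1,\tilde g_2)$. Using $g_i=\sum_k a_i^k h_k$ with $a_i^k(\Id)=-\delta_i^k$, together with the identity $[f_j,e_\alpha]=0$ (left-invariant fields commute with right-invariant ones) for $e_\alpha=\sum_i\alpha_i e_i$, evaluation at $\Id$ gives $\tilde g_j=\langle\lambda,[f_j,f_\alpha]\rangle$ with $f_\alpha=\sum_i\alpha_i f_i$; hence $(\tilde g_0,\tilde g_1,\tilde g_2)$ vanishes identically in $(h_0,h_1,h_2)$ if and only if $f_\alpha\in Z(L)$. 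Since $Z(\heis)=\R f_0$ whereas the other four algebras have trivial centre, this is precisely the dichotomy of $(3.1)$ ($\xinv\neq0$: centre trivial, any $\alpha\neq0$) and $(3.2)$ ($\xinv=0$: one must avoid $\R f_0$, i.e.\ require $\alpha_1^2+\alpha_2^2\neq0$). Whenever $(\tilde g_0,\tilde g_1,\tilde g_2)\not\equiv0$ the third row is nonzero in a column where the first two vanish, so $J$ has rank~$3$ as soon as the first two rows are independent, i.e.\ off the proper algebraic subset $\{h_1h_2=0\}$ if $\xinv\neq0$, or $\{h_0(h_1^2+h_2^2)=0\}$ if $\xinv=0$. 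Discarding these proper subvarieties of the fibre leaves an open dense set on which $J$ has rank~$3$, and propagating by left translations gives $(3)$.

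The hard part is the independence argument in $(3)$: carrying out the reduction to the fibre over $\Id$, identifying the horizontal part of $\nabla g$ at $\Id$ with the linear forms $\lambda\mapsto\langle\lambda,[f_j,f_\alpha]\rangle$, and reading off from $Z(L)$ which directions $\alpha$ are degenerate. The Casimir identity in $(2)$ is routine once the passage from the left- to the right-invariant frame is recognised as the (co)adjoint action, and $(1)$ is essentially a bookkeeping summary of the earlier sections.
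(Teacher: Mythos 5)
Your proof is correct, and parts $(1)$ and $(2)$ follow essentially the paper's own route: $(1)$ is indeed bookkeeping from the preceding sections, and your argument for $(2)$ --- check that the quadratic form $Q$ is a Casimir, note $g=-\Ad^*_{q^{-1}}h$ and use $\Ad^*$-invariance plus evenness of $Q$ --- is the paper's chain of equalities in a slightly different packaging. Where you genuinely diverge is item $(3)$. The paper does something much lighter: it computes only the \emph{vertical} $3\times 3$ minor
$\frac{\partial(H,E,g)}{\partial(h_0,h_1,h_2)}=2\xinv\alpha_0h_1h_2+\alpha_1h_0h_2-\alpha_2h_0h_1$,
observes that this polynomial is not identically zero exactly under the hypotheses of $(3.1)$/$(3.2)$ (this is where the dichotomy $\xinv\neq 0$ versus $\xinv=0$, $\alpha_1^2+\alpha_2^2\neq 0$ comes from), and then invokes analyticity: nonvanishing of the minor at one point gives independence on an open dense set. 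You instead use the full $3\times 6$ Jacobian, which forces you to compute the horizontal derivatives $g_{ij}=\sum_k c_{ji}^k h_k$ --- i.e.\ you need the paper's Lemma, which the authors only prove later and only for the superintegrability theorem --- and you replace the analyticity step by an explicit left-translation equivariance argument. Your route is sound (the identification of the degenerate directions with $Z(L)$ is correct, and since $Z(L)=\R f_0$ only in the Heisenberg case $\xinv=\kinv=0$ and is trivial otherwise, your condition $f_\alpha\notin Z(L)$ implies the theorem's hypotheses in both cases), and it even yields slightly more: for $\xinv=0$, $\kinv\neq 0$ (i.e.\ $\so$, $\sl$ with the Killing metric) your argument shows $H$, $E$, $g_0$ are already independent, which the paper's vertical-minor test cannot detect. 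The price is that your dichotomy (centre trivial or not) does not by itself reproduce the theorem's $\xinv=0$/$\xinv\neq 0$ statement --- you have to check separately that the stated hypotheses imply $f_\alpha\notin Z(L)$, as you do --- and the equivariance step needs the observation (which you flag) that left translations rotate $\alpha$ by $\Ad_q$, so one must know the admissible set of $\alpha$ is $\Ad$-invariant. Both approaches are valid; the paper's is a one-line determinant, yours buys a sharper independence statement at the cost of the Lemma.
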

\begin{proof}
Item $(1)$ was proved in Section \ref{sec:Ham}. 

Now we prove item $(2)$. Consider the left-invariant Hamiltonian
$$
C_l = 4 \kinv H + E =2(h_0^2 + (\kinv - \xinv)h_1^2 +(\kinv+\xinv)h_2^2)
$$
and its right-invariant counterpart
$$
C_r = 2(g_0^2 + (\kinv - \xinv)g_1^2 +(\kinv+\xinv)g_2^2),
$$
then the required identity~\eq{depend} reads $C_l = C_r$. One checks immediately that
$$
\{C_l,h_i\} = \{C_l,g_i\} = \{C_r,h_i\} = \{C_r,g_i\} =0, \qquad i = 0, 1, 2,   
$$
thus $C_l$ and $C_r$ are Casimir functions. Now we prove that they coincide one with another.

If $\lam \in T_{\Id}^*G$, then
$
h_j(\lam) = \langle \lam, f_j(\Id)\rangle  = \langle \lam, -e_j(\Id)\rangle = - g_j(\lam)$,
thus $C_l(\lam) = C_r(\lam)$.

Now take arbitrary $q \in G$ and $\lam \in T_q^*G$. Then
\begin{eqnarray}
C_l(\lam) &=& C_l(L_q^* \lam) = C_r(L_q^* \lam) = C_r(\Ad_{q^{-1}}^* L_q^* \lam) = C_r(R_q^* L_{q^{-1}}^* L_q^* \lam)= C_r(R_q^* \lam) 
\nonumber\\
&=& C_r(\lam),
\label{chain}
\end{eqnarray}
and the identity $C_l=C_r$ is proved. In the proof of chain~\eq{chain} we used the left-invariant property of $C_l$ (1-st equality), the inclusion $L_q^*\lam \in T_{\Id}^* L$ (2-nd equality), the fact that Casimir functions are constant on co-adjoint orbits~\cite{kirillov}
$$
\{\Ad_{q^{-1}}^*(\lam) \mid q \in G\}, \qquad  \Ad_{q^{-1}}^*(\lam) = R_q^* L_{q^{-1}}^* \lam
$$
(3-rd equality), and the right-invariant property of $C_r$ (6-th equality).

In order to prove item $(3)$, notice that   by virtue of analyticity, functional independence of integrals on an open dense domain in $T^{\ast}G$ follows from linear independence of gradients of integrals at a single point $\lambda \in T^{\ast}G$. Let $\pi(\lambda) = \Id$. Then the equality
\begin{eqnarray*}
\frac{\partial(H,E,g)}{\partial (h_0,h_1,h_2)}(\lambda) = \begin{vmatrix} 0 & h_1 & h_2  \\	
h_0 & -\xinv h_1 & \xinv h_2  \\
\alpha_0 & \alpha_1 & \alpha_2 \end{vmatrix}
= 2 \xinv \alpha_0 h_1 h_2 + \alpha_1 h_0 h_2 - \alpha_0 h_0 h_1 \neq 0
\end{eqnarray*}
implies item $(3)$.
\end{proof}

\section{Superintegrability}
There arises a natural question on the number of functionally independent integrals (\ref{integrals}). We  answer  this question after computing the derivatives $g_{ij}$ in (\ref{JId}). We will do this for a special class of local coordinates on $G$ considered e.g. in \cite{bellaiche, jean}.  

A system of local coordinates $(x_0, \dots , x_n)$ on a smooth manifold $M$ is called linearly adapted to a frame $f_0, \dots, f_n \in \Vec(M)$ at a point $q \in M$ if 
$\frac{\partial}{\partial x_i}(q) = f_i(q)$, $i=0,\dots,n$.
If $f_0, \dots, f_n$ is a left-invariant frame on a Lie group $G$, then both canonical coordinates of the first kind
$(x_0, \dots , x_n) \mapsto e^{x_0 f_0 + \dots + x_n f_n}$
and canonical coordinates of the second kind 
$(x_0, \dots , x_n) \mapsto e^{x_n f_n}\dots e^{x_0 f_0}$
are linearly adapted to the frame $f_0, \dots, f_n$ at the identity $\Id \in G$.

The coefficients $g_{ij}$ can be computed by the following general proposition.

\begin{lemma}
Let $G$ be a Lie group, $f_0, \dots, f_n \in \Vec(G)$ a left-invariant vector frame, $e_j = i_{\ast}f_j$ the corresponding right-invariant vector fields. Let 
\begin{equation}
\label{eifj}
e_i = \sum_{j=0}^n a_i^j f_j, \quad a_i^j \in C^{\infty}(G).
\end{equation}
If coordinates $(x_0, \dots, x_n)$ are linearly adapted to the frame $f_0, \dots, f_n$ at the identity $\Id \in G$, then
\begin{equation}
\label{daikdxj}
\frac{\partial a_i^k}{\partial x_j} (\Id) = c_{ji}^k,
\end{equation}
where $[f_i, f_j] = \sum_{k=0}^n c_{ij}^k f_k$, $\ i,j,k = 0,\dots, n$.
\end{lemma}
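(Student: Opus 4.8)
The plan is to compute the Taylor expansion of the coefficients $a_i^j$ to first order at $\Id$. The key idea is to differentiate the defining relation \eqref{eifj} along the flow of the left-invariant field $f_j$ at the identity, using the fact that $f_j(\Id) = \frac{\partial}{\partial x_j}(\Id)$ because the coordinates are linearly adapted. So the left-hand side of \eqref{daikdxj} is $\frac{\partial a_i^k}{\partial x_j}(\Id) = f_j a_i^k \big|_{\Id}$, and the task reduces to computing the directional derivative $f_j a_i^k$ at $\Id$.

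To extract $a_i^k$ from the vector field identity, I would pair with the dual coframe. Since $f_0,\dots,f_n$ is a frame, there is a left-invariant coframe $\omega^0,\dots,\omega^n$ with $\langle \omega^k, f_j\rangle = \delta^k_j$, and then $a_i^k = \langle \omega^k, e_i\rangle$. Now differentiate: $f_j a_i^k = f_j \langle \omega^k, e_i\rangle$. Here $\omega^k$ is left-invariant, so $L_{f_j}\omega^k = -\iota_{f_j}d\omega^k$ (Cartan's formula plus $d\langle\omega^k,f_j\rangle=0$... more carefully, $\mathcal{L}_{f_j}\omega^k = d\iota_{f_j}\omega^k + \iota_{f_j}d\omega^k = \iota_{f_j}d\omega^k$ since $\iota_{f_j}\omega^k=\delta^k_j$ is constant), while $e_i$ is right-invariant, hence its flow commutes with the left flow of $f_j$, so $\mathcal{L}_{f_j} e_i = [f_j, e_i] = 0$. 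Therefore $f_j\langle\omega^k,e_i\rangle = \langle \mathcal{L}_{f_j}\omega^k, e_i\rangle = \langle \iota_{f_j}d\omega^k, e_i\rangle = d\omega^k(f_j, e_i)$. Evaluating at $\Id$, where $e_i(\Id) = -f_i(\Id)$, and using the Maurer–Cartan structure equation $d\omega^k(f_i,f_j) = -\langle \omega^k, [f_i,f_j]\rangle = -c_{ij}^k$ (with the sign convention matching $[f_i,f_j]=\sum_k c_{ij}^k f_k$), I get $\frac{\partial a_i^k}{\partial x_j}(\Id) = d\omega^k(f_j, -f_i)(\Id) = -d\omega^k(f_j,f_i) = -(-c_{ji}^k) = c_{ji}^k$, as claimed.

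An alternative, perhaps more transparent route is to work in the Lie group directly: write $e_i(q) = \frac{d}{ds}\big|_{s=0}\, i\big(\exp(sf_i)\cdot i(q)\big)$ or simply use $e_i = i_*f_i$ together with $i_*|_{\Id} = -\Id$, and compute the flow of $f_j$ through $\Id$ as $t\mapsto \exp(tf_j)$ up to first order in the adapted coordinates. Then $a_i^k(\exp(tf_j))$ can be expanded using $\Ad_{\exp(-tf_j)}$ acting on $f_i(\Id)$, and the first-order term of $\Ad_{\exp(-tf_j)} = \exp(-t\,\mathrm{ad}_{f_j})$ is $-t\,\mathrm{ad}_{f_j}(f_i) = -t[f_j,f_i] = -t\sum_k c_{ji}^k f_k$; combined with the overall sign from inversion this again yields $c_{ji}^k$. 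I would present whichever of these computations is shortest given the conventions already fixed in the paper.

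I expect the main obstacle to be purely bookkeeping of signs: there are three places where a sign can flip — the inversion $i_*|_{\Id}=-\Id$, the Maurer–Cartan equation (or equivalently the relation between $\mathrm{ad}$ and the structure constants), and the order of indices $c_{ji}^k$ versus $c_{ij}^k$ — and the statement \eqref{daikdxj} is precisely the combination that survives. There is no analytic difficulty; everything is a first-order jet computation at a single point, and the frame being left-invariant makes the structure constants genuinely constant, so no higher-order correction terms intrude. I would therefore devote the writeup to carefully tracking these signs and double-check the result against the already-computed matrix \eqref{JId} and the explicit brackets \eqref{fifj}, which fix $n=2$, $c_{21}^0 = 1$, etc., so that the entries $g_{ij} = \frac{\partial g_i}{\partial x_j}(\Id) = \sum_k \frac{\partial a_i^k}{\partial x_j}(\Id)\, h_k = \sum_k c_{ji}^k h_k$ reproduce \eqref{JId} consistently.
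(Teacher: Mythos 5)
Your proposal is correct and follows essentially the same route as the paper: extract $a_i^k=\langle\omega^k,e_i\rangle$ with the dual coframe, use $[f_j,e_i]=0$ to reduce $f_j a_i^k$ to $\langle \mathcal L_{f_j}\omega^k,e_i\rangle$, and evaluate at $\Id$ where $e_i=-f_i$. The only cosmetic difference is that you compute $\langle \mathcal L_{f_j}\omega^k,f_i\rangle=-c_{ji}^k$ via Cartan's formula and the Maurer--Cartan equation, while the paper gets the same identity by applying the Leibniz rule for the Lie derivative to the constant pairing $\langle\omega^k,f_i\rangle=\delta_{ki}$.
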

\begin{proof}
Introduce the dual coframe on $G$:
$\omega_0, \dots, \omega_n \in \Lambda^1(G)$, $\langle\omega_i, f_j\rangle = \delta_{ij}$, $i,j = 0,\dots, n$.
Then decomposition (\ref{eifj}) reads $a_i^j = \langle\omega_j, e_i\rangle$. 

Further, for any function $\varphi \in C^{\infty}(G)$ we have $\frac{\partial \varphi}{\partial x_j}(\Id) = (f_j \varphi)(\Id)$. Thus
$\frac{\partial a_i^k}{\partial x_j}(\Id) = (f_j a_i^k)(\Id)$, $i,j,k = 0,\dots,n$.
Now we compute the derivative in the right-hand side via Leibnitz's rule:
$$
Y \langle\omega, X\rangle = \langle L_Y \omega, X\rangle + \langle\omega, [Y,X]\rangle, \qquad X,Y \in \Vec(G), \quad \omega \in \Lambda^1(G).
$$
Since left-invariant fields commute with right-invariant ones, we have
$$f_j a_i^k = f_j \langle \omega_k, e_i\rangle = \langle L_{f_j}\omega_k, e_i\rangle + \langle\omega_k, [f_j,e_i]\rangle = \langle L_{f_j}\omega_k, e_i\rangle.$$
On the other hand,
\begin{eqnarray*}
0 &=& f_j \delta_{ki} = f_j \langle\omega_k, f_i\rangle = \langle L_{f_j}\omega_k, f_i\rangle + \langle\omega_k, [f_j,f_i]\rangle=\\
&=& \langle L_{f_j}\omega_k, f_i\rangle + \langle\omega_k, \sum_{l=0}^n c_{ji}^l f_l\rangle = \langle L_{f_j}\omega_k, f_i\rangle + c_{ji}^k.
\end{eqnarray*}
Thus
$$(f_j a_j^k)(\Id) = \langle L_{f_j}\omega_k, e_i\rangle(\Id) = -\langle L_{f_j}\omega_k, f_i\rangle(\Id) = c_{ji}^k,$$
and equality (\ref{daikdxj}) follows.
\end{proof}
Now we can compute the coefficients 
$$g_{ij} = \frac{\partial g_i}{\partial x_j}(\Id) = \sum_{k=0}^2 \frac{\partial a_i^k}{\partial x_j}(\Id) h_k = \sum_{k=0}^2 c_{ji}^k h_k,$$
thus
\begin{equation}
\label{J}
J = \left(\begin{array}[c]{c c c c c c}
0 & h_1 & h_2 & 0 & 0 & 0 \\	
h_0 & -\xinv h_1 & \xinv h_2 & 0 & 0 & 0 \\
-1 & 0 & 0 & 0 & (\xinv+\kinv) h_2 & (\xinv-\kinv) h_1 \\
0 & -1 & 0 & -(\xinv+\kinv) h_2 & 0 & h_0 \\
0 & 0 & -1 & (\kinv-\xinv) h_1 & -h_0 & 0 
\end{array}
\right).
\end{equation}

Since the integrals~\eq{integrals} are dependent by item~(2) of Th.~\ref{th1}, then $\rank J <5$. On the other hand,  
\begin{eqnarray*}
\frac{\partial(H,g_0,g_1,g_2)}{\partial (h_0,h_1,h_2,x_1)} = \begin{vmatrix} 0 & h_1 & h_2 & 0 \\	
-1 & 0 & 0 & (\xinv + \kinv) h_2  \\
0 & -1 & 0 & 0 \\ 0 & 0 & -1 & -h_0\end{vmatrix}
= - h_0 h_1 \not\equiv 0.
\end{eqnarray*}
Thus if $\pi(\lambda) = \Id$ and $h_0(\lam) h_1(\lam) \neq 0$, then $\rank J(\lambda)=4$.
We get the following statement.

\begin{theorem}
The integrals $H$, $g_0$, $g_1$, $g_2$ are functionally independent on the set $\{\lam \in T^*G \mid h_0(\lam) h_1(\lam) \neq 0\}$. 

The Hamiltonian vector field $\vH$ is superintegrable.
\end{theorem}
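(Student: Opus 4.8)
The plan is to deduce both assertions of the theorem directly from the Jacobian matrix $J$ in~\eq{J} together with the structural facts already established. First I would establish functional independence of $H,g_0,g_1,g_2$ on the open set $U=\{\lam\in T^*G\mid h_0(\lam)h_1(\lam)\neq 0\}$. By analyticity of all the data, it suffices (as in the proof of Th.~\ref{th1}) to verify linear independence of the gradients at a single point of each $G$-orbit, and since these four integrals and the structure equations are $G$-equivariant, it is enough to check at points $\lam$ with $\pi(\lam)=\Id$. At such a point the $4\times 5$ submatrix of $J$ formed by the rows $\nabla H,\nabla g_0,\nabla g_1,\nabla g_2$ has the $4\times 4$ minor in the columns $(h_0,h_1,h_2,x_1)$ equal to $-h_0h_1$, as displayed; hence the rank is $4$ whenever $h_0h_1\neq 0$, giving functional independence on $U$, which is open and dense in $T^*G$ (its complement being the analytic set $\{h_0h_1=0\}$).

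Next I would verify that this yields superintegrability in the sense of~\eq{fifjPij}--\eq{rankPij}. Here $\dim T^*G=6$, so $d=3$; take the $2d-n=4$ functions $f=(f_1,f_2,f_3,f_4)=(H,g_0,g_1,g_2)$, so that $n=2$. The required identities $\{f_i,f_j\}=P_{ij}\circ f$ follow from the Poisson brackets already recorded: $\{H,g_i\}=0$ because left-invariant Hamiltonians commute with right-invariant ones (Section~\ref{sec:Ham}), and $\{g_2,g_1\}=g_0$, $\{g_1,g_0\}=(\xinv+\kinv)g_2$, $\{g_2,g_0\}=(\xinv-\kinv)g_1$ by~\eq{gigj}; thus each $P_{ij}$ is an explicit linear (hence smooth) function on $f(M)\subset\R^4$. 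The structure matrix $(P_{ij})$ is the $4\times 4$ skew matrix with the $H$-row and $H$-column zero and the lower-right $3\times 3$ block
\[
\begin{pmatrix}
0 & (\xinv+\kinv)g_2 & -(\xinv-\kinv)g_1\\
-(\xinv+\kinv)g_2 & 0 & g_0\\
(\xinv-\kinv)g_1 & -g_0 & 0
\end{pmatrix}.
\]
This $3\times 3$ skew-symmetric block generically has rank $2$ (a skew matrix of odd size is always singular, and here the $(2,3)$-entry $g_0$ is not identically zero on $f(M)$), so $(P_{ij})$ has rank $2=2d-2n$ on an open dense subset of $f(M)$, which is exactly~\eq{rankPij}. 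Together with the functional independence just proved, this is the definition of superintegrability, so $\vH$ is superintegrable.

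I do not expect any serious obstacle: the only points requiring a little care are (i) confirming that $f(M)$, the image of the momentum-type map $(H,g_0,g_1,g_2)$, does not accidentally lie in the zero locus of $g_0$ — but $g_0$ is the Hamiltonian of the nontrivial right-invariant field $e_0$ and takes nonzero values, so the rank-$2$ condition holds on a dense set; and (ii) being explicit that the "open dense" clauses are legitimate because everything in sight (the $g_i$, their brackets, the minors of $J$) is real-analytic, so a nonvanishing-somewhere condition automatically upgrades to nonvanishing on an open dense set. If one wants the cleanest exposition, I would state the functional-independence claim, prove it via the displayed $4\times 4$ minor $-h_0h_1$, then invoke the bracket relations~\eq{gigj} to read off $(P_{ij})$ and its rank, and conclude. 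One could alternatively remark that superintegrability with $n=2$ also follows abstractly from Th.~\ref{th1}(3) by noting that $g_0,g_1,g_2$ generate (via $H$) a $4$-dimensional space of integrals closing into a finite-dimensional Poisson algebra isomorphic to $L^*$ with its Lie--Poisson structure, but the direct minor computation is shorter and self-contained.
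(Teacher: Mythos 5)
Your proposal is correct and follows essentially the same route as the paper: functional independence is read off from the $4\times 4$ minor $\partial(H,g_0,g_1,g_2)/\partial(h_0,h_1,h_2,x_1)=-h_0h_1$ of the matrix $J$ in~\eq{J}, and superintegrability with $d=3$, $n=2$ is obtained from the brackets~\eq{gigj}; you merely spell out the rank-$2$ computation for $(P_{ij})$ that the paper calls ``immediate.'' (A couple of entries in your displayed $3\times 3$ block have the wrong sign, e.g.\ $\{g_0,g_1\}=-(\xinv+\kinv)g_2$, but this does not affect skew-symmetry or the rank argument.)
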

\begin{proof}
Functional independence of the 4 integrals was proved before statement of this theorem.

Conditions of superintegrability~\eq{fifjPij}, \eq{rankPij} with $d = 3$, $n = 2$ follow immediately from the  Poisson brackets~\eq{gigj}.
\end{proof}

\section{Conclusion}
In this paper we proved that the Hamiltonian system of ODEs for left-invariant SR
geodesics is Liouville integrable (and superintegrable) for any
 3-dimensional unimodular Lie group, namely, for Lie groups with the Lie algebras
$\heis$, $\so$, $\sl$, $\se$, and $\sh$.   
Explicit integration of these Hamiltonian systems was done in other works for the following
 Lie groups (in terms of the invariants $\xinv$, $\kinv$ in~\eq{fifj}):
\begin{itemize}
\item[$\xinv = \kinv = 0$]: the Heisenberg group~\cite{versh_gersh, brock},
\item[$\xinv = 0$, $\kinv \neq 0$]: the Lie groups $\SO$ and $\SL$ with the Killing metric~\cite{boscain_rossi},
\item[$\xinv = \kinv \neq 0$]: the group of Euclidean motions of the plane  $\SE$~\cite{max_sre, cut_sre1, cut_sre2},
\item[$\xinv = - \kinv \neq 0$]: the group of hyperbolic motions of the plane  $\SH$~\cite{sh2, mazhitova}.
\item[$0 < \xinv < \kinv$]: the Lie group $\SO$   with general left-invariant SR metric~\cite{bonnard_cots_sher}.
\end{itemize} 
The remaining cases ($\xinv > \max(0, \kinv)$, $\xinv \neq \kinv$, i.e., the Lie group $\SL$   with general left-invariant SR metric) are still to be studied. We hope that results of this paper can be useful for the analysis of these cases. Moreover, even in the cases already integrated, the Liouville integrability and superintegrability of the Hamiltonian system may provide a geometric information about dynamics of the Hamiltonian flow~\cite{arnold_mech, fasso}.

An interesting direction of further study of integrability of SR problems is suggested by the recent classification of left-invariant SR structures of Engel type on 4D Lie groups and homogeneous spaces~\cite{almeida}. 

\section*{Acknowledgment}
We thank Dr. Lev Lokutsievskiy for indicating the short proof of item (2) of Th.~\ref{th1}.



\begin{thebibliography}{99}

\bibitem{agrachev_barilari}
Agrachev~A.A., Barilari D.: Sub-Riemannian structures on 3D Lie
groups, {\tt arXiv:1007.4970}, Journal of Dynamical and Control Systems, Vol 18 (2012), No. 1, 21--41.

\bibitem{ABB}
A. A. Agrachev, Davide Barilari, and Ugo Boscain. Introduction to Riemannian and Sub-Riemannian Geometry (from
Hamiltonian viewpoint). Preprint SISSA, September 2012.

\bibitem{notes}
A.A.~Agrachev, Yu.L. Sachkov, {\em Control Theory from the Geometric Viewpoint},
Springer-Verlag, 2004.

\bibitem{almeida}
D. M. Almeida,
Sub-Riemannian Homogeneous Spaces of Engel Type,
{\em Journal of Dynamical and Control Systems},
April 2014, Volume 20, Issue 2, pp 149--166.

\bibitem{engel}
Ardentov A.A., Sachkov Yu.L., 
Extremal trajectories in nilpotent sub-Riemannian problem on Engel group,
{\em Sbornik Mathematics}, 
202 (2011), No. 11, 31--54.
English translation:
{\em Sbornik: Mathematics} (2011), 202(11):1593--1616.


\bibitem{engel_conj}
A.Ardentov, Yu. Sachkov,
Conjugate points in nilpotent sub-Riemannian problem on the Engel group,
{\em Journal of Mathematical Sciences}, 
Vol. 195, No. 3, December, 2013, 369--390.

\bibitem{engel_cut}
A.Ardentov, Yu. Sachkov,
Cut time in nilpotent sub-Riemannian problem on the Engel group,
{\em in preparation}.

\bibitem{arnold_mech}
V.I. Arnold,  {\em Mathematical Methods of Classical Mechanics}, Springer, 1997.


\bibitem{BBG_step2}
D. Barilari, U. Boscain, J.P. Gauthier, On 2-step, corank 2 nilpotent sub-Riemannian metrics. {\em SIAM J. CONTROL OPTIM}, Vol. 50, No. 1, pp. 559--582.


\bibitem{bellaiche}
A. Bella\"iche, The tangent space in sub-Riemannian geometry, {\em in: Sub-Riemannian geometry},
vol. 144 of Progr. Math., Birkh\"auser, Basel, 1996, pp. 1--78.%


\bibitem{bonnard_cots_sher}
B. Bonnard, O. Cots, N. Shcherbakova,  The 
Serret-Andoyer Riemannian metric and Euler-Poinsot rigid body motion.  {\em Mathematical 
Control and Related Fields}, vol. 3 (2013) (n° 3), pp. 287-302.

\bibitem{boscain_rossi}
U. Boscain, F. Rossi, 
Invariant Carnot-Caratheodory metrics on $S^3$, $\SO$, $\SL$ and Lens Spaces, 
{\em SIAM J. Control Optim.},  Vol 47, pp. 1851-1878, (2008).

\bibitem{brock}
R.~Brockett,
Control theory and singular Riemannian geometry,
In: {\em New Directions in Applied Mathematics}, (P.~Hilton and G.~Young eds.),
Springer-Verlag, New York, 11--27.


\bibitem{sh2}
Ya. Butt, Yu. Sachkov, A. Bhatti,
Extremal Trajectories and Maxwell Strata in Sub-Riemannian Problem on
Group of Motions of Pseudo Euclidean Plane,
{\em Journal of Dynamical and Control Systems}, accepted.

\bibitem{gromov}
M. Gromov, Carnot-Caratheodory spaces seen from within, {\em in: Sub-Riemannian geometry},
vol. 144 of Progr. Math., Birkh\"auser, Basel, 1996.

\bibitem{jean}
F. Jean, Sub-Riemannian Geometry, {\em Lectures given at the
Trimester on Dynamical and Control Systems}, Trieste, September -- December 2003.


\bibitem{jurd_book}
V.~Jurdjevic,
Geometric Control Theory,
Cambridge University Press, 1997.

\bibitem{kirillov}
A. A. Kirillov,   {\em Lectures on the orbit method}, Graduate Studies in Mathematics 64, Providence, RI: American Mathematical Society, (2004).


\bibitem{ledonne}
Enrico Le Donne, {\em Lecture notes on sub-Riemannian geometry}. Preprint, 2010.

\bibitem{mazhitova}
A.~D.~Mazhitova,
Sub-Riemannian Geodesics on the Three-Dimensional Solvable Non-Nilpotent
 Lie Group $\SOLV^-$,  Journal of Dynamical and Control Systems, Vol 18 (2012), No. 3, 309--322.


\bibitem{mish_fom}
A. S. Mishchenko, A. T. Fomenko,
Generalized Liouville method of integration of Hamiltonian systems,
{\em Functional Analysis and Its Applications}, 
12 (1978), No. 2, pp 113--121.  
  
\bibitem{max_sre}
I. Moiseev, Yu.L. Sachkov,
Maxwell strata in sub-Riemannian problem  on the group of motions of a plane,
{\em ESAIM: COCV}, 16 (2010), 380--399.

\bibitem{shapiro}
R. Montgomery, M. Shapiro, A. Stolin,
A nonintegrable sub-Riemannian geodesic flow on a Carnot group,
{\em Journal of Dynamical and Control Systems},
October 1997, Volume 3, Issue 4, pp 519--530.

\bibitem{mont}
R. Montgomery, 
{\em A Tour of Subriemannian Geometries, Their Geodesics
and Applications}. 
American Mathematical Society (2002).

\bibitem{nehoroshev}
N.N. Nekhoroshev, Action-angle variables and their generalizations. {\em Trans. Moscow Math. Soc.} 26 (1972), 180--198.

 \bibitem{PBGM}
L.S.~Pontryagin, V.G.~Boltyanskii, R.V.~Gamkrelidze, E.F.~Mishchenko, 
{\em The mathematical theory of optimal processes}, Wiley Interscience, 1962. 


\bibitem{sachkov_lectures}
Yu.L. Sachkov,
Control Theory 
on Lie Groups,
{\em Journal of Mathematical Sciences},   
  Vol. 156, No. 3, 2009, 381-439.
  

\bibitem{cut_sre1}
Yu.L. Sachkov,
Conjugate and cut time in the sub-Riemannian problem on the group of motions of a 
plane, {\em ESAIM: COCV}, 16 (2010), 1018--1039.


\bibitem{cut_sre2}
Yu.L. Sachkov,
Cut locus and optimal synthesis in the sub-Riemannian problem  on the group of motions of a 
plane, {\em ESAIM: COCV}, 17 (2011), 293--321.


\bibitem{fasso}
F. Fasso,  Superintegrable Hamiltonian systems: geometry and applications, {\em Acta Appl. Math.} 87(2005) 93.

\bibitem{dido_exp}
Yu. L. Sachkov, Exponential mapping in generalized Dido's problem, {\em Mat. Sbornik},  194  	(2003),  9:  63--90 (in Russian).
English translation in: {\em Sbornik: Mathematics}, {\bf 194} (2003).

\bibitem{max1}
Yu. L. Sachkov, Discrete symmetries in the generalized Dido problem (in Russian),
{\em Matem. Sbornik},
{\bf 197}  (2006),  2:  95--116.
English translation in: {\em Sbornik: Mathematics}, {\bf 197} (2006), 2: 235--257.

\bibitem{max2}
Yu. L. Sachkov, The Maxwell set in the generalized Dido problem (in Russian),
{\em Matem. Sbornik},
{\bf 197}  (2006),  4:  123--150.
English translation in: {\em Sbornik: Mathematics}, {\bf 197} (2006), 4: 595--621.

\bibitem{max3}
Yu. L. Sachkov, Complete description of the Maxwell strata in the generalized Dido problem (in Russian),
{\em Matem. Sbornik},
{\bf 197}  (2006),  6:  111--160.
English translation in: {\em Sbornik: Mathematics}, {\bf 197} (2006), 6: 901--950.





\bibitem{stricharts} 
Robert S. Strichartz, Sub-Riemannian Geometry, 
{\em Journal of Differential Geometry}, 24(2): 221--263,  
1986.%

\bibitem{versh_gersh}
A.M.~Vershik, V.Y.~Gershkovich, Nonholonomic Dynamical Systems. Geometry of
distributions and variational problems. (Russian) In: {\em Itogi Nauki i Tekhniki:
Sovremennye Problemy Matematiki, Fundamental'nyje Napravleniya}, Vol.~16, VINITI,
Moscow, 1987, 5--85. (English translation in: {\em Encyclopedia of Math. Sci.} {\bf
16}, Dynamical Systems 7, Springer Verlag.)
\end{thebibliography}
\end{document}